\theoremstyle{plain}
\newtheorem{theorem}{Theorem}[section]
\newtheorem{lemma}[theorem]{Lemma}
\newtheorem{proposition}[theorem]{Proposition}
\theoremstyle{definition}
\begin{document}
\title{A pattern sequence approach to Stern's sequence}
\author{Michael Coons}
\author{Jeffrey Shallit}
\address{University of Waterloo, Dept.~of Pure Mathematics, Waterloo, Ontario, N2L 3G1}
\email{mcoons@math.uwaterloo.ca}
\address{University of Waterloo, School of Computer Science, Waterloo, Ontario, N2L 3G1}
\email{shallit@cs.uwaterloo.ca}
\thanks{The research of M.~Coons is supported by a Fields--Ontario Fellowship and NSERC, and the research of J.~Shallit is supported by NSERC} 
\date{\today}


\begin{abstract} Let $w \in 1\{0,1\}^*$ and let $a_w (n)$ be the number
of occurrences of the word $w$ in the binary expansion of
$n$. Let $\{s(n)\}_{n\geqslant 0}$ denote the Stern
sequence, defined by $s(0)=0$, $s(1)=1$, and for $n\geqslant 1$
$$s(2n)=s(n),\quad{\rm and}\quad s(2n+1)=s(n)+s(n+1).$$ In this note,
we show that $$s(n) = a_1 (n) +     \sum_{w \in 1\{0,1\}^*} s( [ \overline{w} ]_2 )
a_{w1} (n) $$ where $\overline{w}$ denotes the complement of $w$
(obtained by sending $0\mapsto 1$ and $1\mapsto0$) and $[w]_2$ denotes
the integer specified by the word $w\in \{0,1\}^*$ interpreted in base
$2$.
\end{abstract}

\maketitle

\section{Introduction}

For $w\in1\{0,1\}^*$ let 
$a_w(n)$ denote the number of (possibly 
overlapping) occurrences of
the word $w$ in the binary expansion of $n$.

For $w \in \{0,1\}^*$ let

\begin{itemize}

\item $\overline{w}$ denote the complement of the word $w$ 
obtained by sending $0\mapsto 1$ and
$1\mapsto0$, and

\item $[w]_2$ denote the integer specified by the word $w$ interpreted in base $2$.
\end{itemize}

Morton and Mourant \cite{MM} proved that every sequence of
real numbers
$\lbrace S(n) \rbrace_{n \geqslant 0}$ with $S(0) = 0$
has a unique {\it pattern sequence expansion}
$$ S(n) = \sum_{w \in 1\lbrace 0,1 \rbrace^*} \hat{S} ([w]_2) a_w(n) $$
where $\hat{S} : \mathbb{N} \to \mathbb{R}$; here we have used the definition
$\mathbb{N} := \lbrace 1,2,3,\ldots \rbrace$. See also \cite{AMS,AS} and \cite[Thm.\ 3.3.4]{AS2}.

The {\em Stern sequence} (also called {\em Stern's diatomic sequence};
sequence
A002487 in Sloane's {\it Encyclopedia}) is defined by the recurrence relations
$s(0)=0$, $s(1)=1$, and, for $n \geqslant 1$, by 
$$s(2n)=s(n),\quad{\rm and}\quad s(2n+1)=s(n)+s(n+1).$$
This famous sequence has many interesting properties; for example,
see the recent survey of Northshield \cite{N}.

Applying Morton and Mourant's theorem to the Stern sequence, we obtain
$$ s(n) = a_1 (n) + a_{101}(n) + 2 a_{1001}(n) + a_{1011} (n) +
a_{1101} (n) + 3 a_{10001} (n) + \cdots $$
In this note, we give the closed form for this expansion,
as follows:

\begin{theorem}\label{main} For all $n\geqslant 0$, we have $$s(n) = a_1 (n) +     \sum_{w \in 1\{0,1\}^*} s( [ \overline{w} ]_2 ) a_{w1} (n) .$$
\end{theorem}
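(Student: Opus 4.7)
Let $T(n)$ denote the right-hand side of the claimed identity. The plan is to verify that $T$ satisfies the defining recurrence of $s$, namely $T(0)=0$, $T(1)=1$, $T(2n)=T(n)$, and $T(2n+1)=T(n)+T(n+1)$; the theorem then follows by uniqueness of the sequence determined by this recurrence. The initial values are immediate, since $a_w(0)=0$ for every $w\in 1\{0,1\}^*$, while $a_1(1)=1$ and $a_{w1}(1)=0$ whenever $|w|\geqslant 1$. Every word occurring in the defining sum---either $1$ itself or a word of the form $w1$---ends in the symbol $1$. Since the binary expansion of $2n$ is obtained from that of $n$ by appending a $0$, no new occurrence of any word ending in $1$ can be created, so $T(2n)=T(n)$.

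The main step is $T(2n+1)=T(n)+T(n+1)$. Appending a $1$ to the binary expansion of $n$ produces the binary expansion of $2n+1$, and for every word $u$ ending in $1$ we have $a_u(2n+1)=a_u(n)+[u\text{ is a suffix of the binary expansion of }2n+1]$. Substituting into the definition of $T$ and separating old from new contributions gives
$$T(2n+1) \;=\; T(n) + 1 + \!\!\sum_{\substack{w\in 1\{0,1\}^*\\ w\text{ is a suffix of the binary expansion of }n}}\!\! s([\overline{w}]_2).$$
Invoking the inductive hypothesis $T(n+1)=s(n+1)$, the desired recurrence reduces to the auxiliary identity
$$(*)\qquad s(n+1) \;=\; 1 + \!\!\sum_{\substack{w\in 1\{0,1\}^*\\ w\text{ is a suffix of the binary expansion of }n}}\!\! s([\overline{w}]_2).$$

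The crux of the argument is the verification of $(*)$, which I would establish by strong induction on $n$ simultaneously with the companion identity
$$(**)\qquad s(n) \;=\; \!\!\sum_{\substack{w\in 1\{0,1\}^*\\ w\text{ is a suffix of the binary expansion of }n}}\!\! s([\overline{w}]_2+1).$$
Writing $n=2j$ or $n=2j+1$, the suffixes of the binary expansion of $n$ starting with $1$ biject with the suffixes of the binary expansion of $j$ starting with $1$ (with the extra suffix $1$ appearing in the odd case), and the operations $v\mapsto v0$ and $v\mapsto v1$ on words translate to $m\mapsto 2m$ and $m\mapsto 2m+1$ on $[\overline{v}]_2$. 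Combined with $s(2m)=s(m)$ and $s(2m+1)=s(m)+s(m+1)$, the identities $(*)$ and $(**)$ for $n$ then fall out of those for $j$. The main obstacle is that the two identities genuinely need one another: in the even case of $(*)$, for instance, the recurrence $s(2j+1)=s(j)+s(j+1)$ produces two sums, one closed off by $(*)$ for $j$ and the other precisely by $(**)$ for $j$, which is why they must be proved in tandem.
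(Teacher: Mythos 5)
Your proposal is correct, and its outer shell matches the paper's proof almost exactly: you define the right-hand side as a function ($T$ here, $f$ in the paper), verify the values at $0$ and $1$, note that appending a $0$ creates no new occurrence of a word ending in $1$ so that $T(2n)=T(n)$, and reduce the odd case to the auxiliary identity $(*)$, which is precisely the paper's Lemma~\ref{s2} after the substitution $[\overline{w}]_2 = 2^{|w|}-[w]_2-1$. Where you genuinely diverge is in how $(*)$ is proved. The paper derives it from Stern's classical identity $s(2^k+m)=s(2^k-m)+s(m)$ for $m\leqslant 2^k$ (Lemma~\ref{s2k}): writing $n=\sum_{\ell=0}^k 2^{i_\ell}$, each term $s\bigl(2^{i_m}-\sum_{\ell=0}^{m-1}2^{i_\ell}-1\bigr)$ becomes a difference $s\bigl(\sum_{\ell=0}^{m}2^{i_\ell}+1\bigr)-s\bigl(\sum_{\ell=0}^{m-1}2^{i_\ell}+1\bigr)$ and the sum telescopes to $s(n+1)-1$. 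You instead run a joint strong induction on $(*)$ and the companion identity $(**)$, using the fact that complementation sends the suffix maps $v\mapsto v0$ and $v\mapsto v1$ to $m\mapsto 2m+1$ and $m\mapsto 2m$ respectively on $m=[\overline{v}]_2$ --- note the swap, which your phrasing leaves ambiguous but which your discussion of the even case shows you have right, and which is exactly why the Stern recurrence forces $(*)$ and $(**)$ to be proved in tandem. I have checked that all four cases of the joint induction close up. Your route is more self-contained (it never invokes Lemma~\ref{s2k} or any telescoping) at the price of having to discover the strengthened hypothesis $(**)$, and it yields the pleasant byproduct that $s(n)$ equals the sum of $s(2^{|w|}-[w]_2)$ over suffixes $w\in 1\{0,1\}^*$ of $(n)_2$; the paper's route is shorter once Stern's identity is granted. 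One presentational remark: you announce the strategy as ``verify the recurrence and appeal to uniqueness,'' yet mid-argument you invoke the inductive hypothesis $T(n+1)=s(n+1)$; the cleaner framing (and the paper's) is a single strong induction proving $T(m)=s(m)$ for all $m$, using the derived relation $T(2n+1)=T(n)+s(n+1)$.
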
 

Note that if $w \in 1\lbrace 0,1 \rbrace^*$, then 
$[\overline{w}]_2 < [w]_2$, so that our theorem could also serve as an 
alternate definition of $s(n)$.

Our result can be contrasted with an unpublished result of Calkin and Wilf \cite[Thm.\ 5]{CW}, which was recently rediscovered by Bacher \cite[Prop.\ 1.1]{B} (see also Finch \cite[p.~148]{F}):

\begin{proposition}[Calkin and Wilf] 
The $n$th Stern value $s(n)$ is equal to
the number of subsequences of the form
$1,101,10101,\ldots=\{1(01)^*\}$ in the binary expansion of $n$.
\end{proposition}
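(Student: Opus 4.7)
The plan is to set $T(n)$ equal to the right-hand side of the claimed formula and show $T(n)=s(n)$ for all $n$ by verifying that $T$ satisfies the same initial conditions and recurrence as $s$. One has $T(0)=0$ and $T(1)=1$ directly, and $T(2n)=T(n)$ because appending a $0$ to the binary expansion of $n$ creates no new occurrence of $1$ nor of any $w1$ (each of these patterns ends in $1$).

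The only substantive step is the odd case. Appending a $1$ to the binary expansion of $n$ increases $a_1$ by one, and for $w\in 1\{0,1\}^*$ increases $a_{w1}$ by one exactly when $w$ is a suffix of the binary expansion of $n$, and by zero otherwise. Hence
\[ T(2n+1)-T(n) \;=\; 1 + \sum_{\substack{w\in 1\{0,1\}^*\\ w \text{ a suffix of } n}} s([\overline{w}]_2), \]
where ``suffix of $n$'' is shorthand for suffix of the binary expansion of $n$. The induction will close provided one establishes
\[ s(n+1) - 1 \;=\; \sum_{\substack{w\in 1\{0,1\}^*\\ w \text{ a suffix of } n}} s([\overline{w}]_2). \qquad(\ast)\]

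Identity $(\ast)$ is the principal obstacle. I plan to prove it by induction on $n$, coupled with the companion identity
\[ s(n) \;=\; \sum_{\substack{w\in 1\{0,1\}^*\\ w \text{ a suffix of } n}} s([\overline{w}]_2+1). \qquad(\ast\ast)\]
For $n=2k+1$, the suffixes of $n$ that start with $1$ are the symbol $1$ itself together with $u1$ for each suffix $u$ of $k$ starting with $1$; since $[\overline{u1}]_2 = 2[\overline{u}]_2$, the rule $s(2m)=s(m)$ rewrites both sums at $n$ as the corresponding sums at $k$. For $n=2k$ the relevant suffixes all have the form $u0$, with $[\overline{u0}]_2 = 2[\overline{u}]_2+1$, so $s(2m+1)=s(m)+s(m+1)$ expresses each sum at $n$ as a linear combination of the $(\ast)$ and $(\ast\ast)$ sums at $k$. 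In each parity, substituting the inductive hypothesis and applying the Stern recurrence a second time reproduces both $(\ast)$ and $(\ast\ast)$ at $n$, completing the argument.
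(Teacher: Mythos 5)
You have proved the wrong statement. The quantity $T(n)$ that your argument actually manipulates is $a_1(n)+\sum_{w\in 1\{0,1\}^*} s([\overline{w}]_2)\,a_{w1}(n)$, the right-hand side of Theorem~\ref{main}: a count of \emph{contiguous} occurrences of all patterns $w1$, weighted by Stern values. The Proposition of Calkin and Wilf is a different identity: it asserts that $s(n)$ equals the \emph{unweighted} number of \emph{not-necessarily-contiguous} subsequences of the binary expansion of $n$ having one of the specific forms $1,101,10101,\ldots$. The tell-tale signs are the weights $s([\overline{w}]_2)$ (which do not appear in the Proposition), the sum over all $w\in 1\{0,1\}^*$ (rather than over the patterns $1(01)^*$ only), and your key step that appending a $1$ increases $a_{w1}$ by one exactly when $w$ is a suffix of $(n)_2$ --- a fact about contiguous factors. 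For scattered subsequences the odd-case bookkeeping is quite different: appending a $1$ creates one new subsequence $1(01)^k$ for \emph{each} scattered occurrence of $1(01)^{k-1}0$ in $(n)_2$, and such occurrences need not end at a suffix and can be numerous, so no sum over suffixes arises.

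A proof of the Proposition instead couples the count $A(n)$ of scattered subsequences of the forms $1(01)^*$ with the count $B(n)$ of scattered subsequences of the forms $1(01)^*0$; one checks $A(2n)=A(n)$, $B(2n)=A(n)+B(n)$, $A(2n+1)=A(n)+B(n)+1$, $B(2n+1)=B(n)$, and induction then gives $A(n)=s(n)$ and $B(n)=s(n+1)-1$. (The paper does not prove the Proposition itself; it only cites Calkin--Wilf and Bacher.) For what it is worth, read as a proof of Theorem~\ref{main} your argument is essentially sound: the reduction to your identity $(\ast)$ matches the paper's, and your coupled induction on $(\ast)$ and $(\ast\ast)$ is a legitimate alternative to the paper's derivation of $(\ast)$ from Stern's identity $s(2^k+n)=s(2^k-n)+s(n)$ via a telescoping sum (Lemmas~\ref{s2k} and~\ref{s2}). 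But that is not the statement you were asked to prove.
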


Here by ``subsequence'' we mean a not-necessarily-contiguous subsequence.

\section{Needed lemmas}

We will need the following two lemmas, the first of which is quite classical and follows directly from the work of Stern \cite{S} (and is easily proven using induction on $k$).

\begin{lemma}[Stern]\label{s2k} 
Let $k$ and $n$ be nonnegative integers. If $n\leqslant 2^k$ then
$$s(2^k+n)=s(2^k-n)+s(n).$$
\end{lemma}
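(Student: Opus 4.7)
The plan is to prove the identity by induction on $k$, using the defining recurrences $s(2j)=s(j)$ and $s(2j+1)=s(j)+s(j+1)$ to reduce the claim at level $k+1$ to the hypothesis at level $k$.

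For the base case $k=0$, only the values $n=0$ and $n=1$ need to be checked; both reduce immediately to $s(1)=1$ and $s(0)=0$.

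For the inductive step, assuming $s(2^{k}+m)=s(2^{k}-m)+s(m)$ for every $0\leqslant m\leqslant 2^{k}$, I would split on the parity of $n\leqslant 2^{k+1}$. If $n=2m$ with $0\leqslant m\leqslant 2^{k}$, the even-index recurrence gives
\[
s(2^{k+1}+2m)=s(2^{k}+m),\qquad s(2^{k+1}-2m)=s(2^{k}-m),\qquad s(2m)=s(m),
\]
so the identity at $(k+1,2m)$ is exactly the inductive hypothesis at $(k,m)$. If $n=2m+1$ with $0\leqslant m\leqslant 2^{k}-1$, the odd-index recurrence expands both sides:
\[
s(2^{k+1}+2m+1)=s(2^{k}+m)+s(2^{k}+m+1),
\]
\[
s(2^{k+1}-(2m+1))=s(2(2^{k}-m-1)+1)=s(2^{k}-m-1)+s(2^{k}-m),
\]
and $s(2m+1)=s(m)+s(m+1)$. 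Applying the inductive hypothesis once at $m$ and once at $m+1$ (the latter being legitimate since $m+1\leqslant 2^{k}$) and regrouping the four resulting terms then matches the two sides.

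The only mild obstacle is to confirm that in the odd case the hypothesis at $m+1$ is used only when $m+1\leqslant 2^{k}$, which follows from $n=2m+1\leqslant 2^{k+1}-1$; the boundary case $n=2^{k+1}$ lands in the even branch with $m=2^{k}$ and causes no trouble. Everything else is a straightforward bookkeeping computation.
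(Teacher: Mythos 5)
Your proof is correct and is exactly the argument the paper has in mind: the paper states this lemma without proof, remarking only that it ``is easily proven using induction on $k$,'' and your induction on $k$ with the parity split and the two applications of the hypothesis (at $m$ and $m+1$) in the odd case carries that out completely. The only cosmetic remark is that when $m=0$ you invoke the odd recurrence at index $j=0$, which the paper states only for $j\geqslant 1$, but the identity $s(1)=s(0)+s(1)$ holds trivially there.
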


\begin{lemma}\label{s2} 
Let $n\geqslant 1$ and write $n=\sum_{\ell=0}^k 2^{i_\ell}$ where
$0\leqslant i_0<\cdots<i_k.$ Then
$$s(n+1)=1+\sum_{m=0}^k
s\left(2^{i_m+1}-\sum_{\ell=0}^m 2^{i_\ell}-1\right).$$
\end{lemma}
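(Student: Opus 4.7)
The plan is to induct on $k$, the number of one-bits in the binary expansion of $n$, using Lemma~\ref{s2k} as the peeling tool. The statement is cleanly recursive in the highest bit: splitting $n=2^{i_k}+n'$ with $n'=\sum_{\ell=0}^{k-1}2^{i_\ell}<2^{i_k}$ means that $n'+1\leqslant 2^{i_k}$, so Lemma~\ref{s2k} applies to rewrite $s(n+1)$ in a form matching one summand of the target identity plus a strictly smaller instance.

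For the base case $k=0$, one has $n=2^{i_0}$, and Lemma~\ref{s2k} (applied with $k=i_0$ and $n=1$) gives $s(2^{i_0}+1)=s(2^{i_0}-1)+s(1)=s(2^{i_0}-1)+1$, which matches the right-hand side $1+s\bigl(2^{i_0+1}-2^{i_0}-1\bigr)$. (The degenerate sub-case $i_0=0$ works separately, since $s(2)=1=1+s(0)$.)

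For the inductive step, assume the claim holds for all integers with at most $k$ one-bits. Given $n=\sum_{\ell=0}^{k}2^{i_\ell}$, apply Lemma~\ref{s2k} to the equation $n+1=2^{i_k}+(n'+1)$ to obtain
\[
s(n+1)=s\bigl(2^{i_k}-n'-1\bigr)+s(n'+1).
\]
The critical bookkeeping observation is that
\[
2^{i_k}-n'-1 \;=\; 2^{i_k+1}-2^{i_k}-n'-1 \;=\; 2^{i_k+1}-\sum_{\ell=0}^{k}2^{i_\ell}-1,
\]
so the first term is precisely the $m=k$ summand in the claimed identity. Since $n'$ has one-bits at exactly the positions $i_0<\cdots<i_{k-1}$, the induction hypothesis rewrites $s(n'+1)$ as $1$ plus the sum of the $m=0,\ldots,k-1$ terms, and assembling the two pieces yields the stated formula.

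The only real obstacle is the index arithmetic: one must verify that the residual argument produced by Lemma~\ref{s2k} matches the $m=k$ summand on the nose, and that the inductive hypothesis applied to $n'$ produces the lower summands with the correct indices $i_0,\ldots,i_{k-1}$. Once that identification is made, the induction is essentially automatic and requires no further sequence-specific input beyond Lemma~\ref{s2k}.
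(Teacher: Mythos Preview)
Your proof is correct. Both arguments rest on the same application of Lemma~\ref{s2k}, but the paper organizes it differently: it rewrites each summand as
\[
s\Bigl(2^{i_m}-\sum_{\ell=0}^{m-1}2^{i_\ell}-1\Bigr)
= s\Bigl(\sum_{\ell=0}^{m}2^{i_\ell}+1\Bigr)-s\Bigl(\sum_{\ell=0}^{m-1}2^{i_\ell}+1\Bigr)
\]
and then observes that the resulting sum telescopes to $s(n+1)-s(1)$, with no induction required. Your induction on $k$, when unrolled, reproduces exactly this telescope (your single application of Lemma~\ref{s2k} at the top bit is the $m=k$ term of the paper's sum, and the inductive hypothesis supplies the remaining terms). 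The paper's version is marginally slicker in that it handles all $k$ at once and needs no base case, while your version makes the recursive structure in the highest bit more explicit; substantively they are the same argument.
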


\begin{proof} 
Let $n\geqslant 1$ and write $n=\sum_{\ell=0}^k 2^{i_\ell}$ where
$0\leqslant i_0<\cdots<i_k$. Note that 
$$\sum_{m=0}^k
s\left(2^{i_m+1}-\sum_{\ell=0}^m 2^{i_\ell}-1\right)=\sum_{m=0}^k
s\left(2^{i_m}-\sum_{\ell=0}^{m-1} 2^{i_\ell}-1\right),$$ 
where we use
the ordinary convention that the empty sum equals $0$.

Since $$\sum_{\ell=0}^{m-1} 2^{i_\ell}+1\leqslant 2^{i_m},$$ 
we can apply Lemma~\ref{s2k} to give 
\begin{align*}\sum_{m=0}^k s\left(2^{i_m}-\sum_{\ell=0}^{m-1} 2^{i_\ell}-1\right) &=\sum_{m=0}^k \left(s\left(2^{i_m}+\sum_{\ell=0}^{m-1} 2^{i_\ell}+1\right)-s\left(\sum_{\ell=0}^{m-1} 2^{i_\ell}+1\right)\right)\\
&=\sum_{m=0}^k \left(s\left(\sum_{\ell=0}^{m} 2^{i_\ell}+1\right)-s\left(\sum_{\ell=0}^{m-1} 2^{i_\ell}+1\right)\right)\\
&=s\left(\sum_{\ell=0}^{k} 2^{i_\ell}+1\right)-s(1)\\
&=s(n+1)-1.
\end{align*}
A minor rearrangement gives the desired result.
\end{proof}

\section{Proof of the theorem}

Let $k\in\mathbb{N}$ and let $(k)_2$ denote the unique word $w \in
1\{0,1\}^*$ for which $[w]_2=k$; that is, $(k)_2$ is the canonical
base-$2$ representation of the integer $k$. We say that $w\in\{0,1\}^*$
is a {\em suffix} of $(k)_2$ if there exists $v\in\{0,1\}^*$
(possibly empty) such that $vw=(k)_2$. As usual, let $|w|$ denote the
length of the word $w \in \{0,1\}^*$.

\begin{proof}[Proof of Theorem~\ref{main}]
Define the sequence $\{f(n)\}_{n\geqslant 0}$ by
$$f(n):= a_1 (n) +     \sum_{w \in 1\{0,1\}^*} s( [ \overline{w} ]_2 )
a_{w1} (n).$$ Thus to prove the theorem, it is enough to show that
$f(n)=s(n)$ for all $n$.  Our proof is by induction on $n$.

For $n=0$, the
sum over $w \in 1\{0,1\}^*$ is $0$, and so we have immediately that
$$f(0)=a_1(0)=0=s(0).$$ For $n=1$, again we have that the sum over $w
\in 1\{0,1\}^*$ is $0$, and so $$f(1)=a_1(1)=1=s(1).$$

For the induction step, we will need to obtain some recursions for $f(n)$.
For even integers, we have 
$a_1(2n)=a_1(n)$ since $(2n)_2=(n)_20.$ For this same reason, those $w
\in 1\{0,1\}^*$ with $a_{w1}(2n)\neq 0$ are precisely those for which
$a_{w1}(n)\neq 0$, and so $a_{w1}(2n)=a_{w1}(n)$. Thus $$\sum_{w \in
1\{0,1\}^*} s( [ \overline{w} ]_2 ) a_{w1} (2n)= \sum_{w \in 1\{0,1\}^*} s( [
\overline{w} ]_2
) a_{w1} (n),$$ and so $$f(2n)=f(n).$$

Getting to our last relationship is a bit more complicated. The last
bit in $(2n+1)_2$ is equal to $1$. Recalling that those $w \in
1\{0,1\}^*$ with $a_{w1}(2n)\neq 0$ are precisely those for which
$a_{w1}(n)\neq 0$ and that $a_{w1}(2n)=a_{w1}(n)$, we have
\begin{equation}
\label{ss}
\sum_{w \in 1\{0,1\}^*} s( [ \overline{w} ]_2
) a_{w1} (2n+1)=\sum_{w \in 1\{0,1\}^*} s( [ \overline{w} ]_2 ) a_{w1}
(n)+\hspace{-.7cm}\sum_{\substack{w \in 1\{0,1\}^*1\\ w\ {\rm
is\ a\ suffix\ of}\ (2n+1)_2}} \hspace{-.7cm} s( [ \overline{w} ]_2
).
\end{equation}
Note that it is immediate that $a_1(2n+1)=a_1(n)+1$.
Combining this with \eqref{ss} gives that
\begin{equation}\label{ss1}f(2n+1)=f(n)+1+\hspace{-.7cm}\sum_{\substack{w
\in 1\{0,1\}^*\\ w1\ {\rm is\ a\ suffix\ of}\ (2n+1)_2}} \hspace{-.7cm}
s( [ \overline{w} ]_2 ).\end{equation} We have that $$\big\{w \in
1\{0,1\}^*: w1\ {\rm is\ a\ suffix\ of}\ (2n+1)_2\big\}=\big\{w \in
1\{0,1\}^*: w\ {\rm is\ a\ suffix\ of}\ (n)_2\big\}.$$

Now suppose that $n\geqslant 1$ and write $n=\sum_{\ell=0}^k
2^{i_\ell}$ where $0\leqslant i_0<i_1<\cdots<i_k.$ Noting that for all
$w \in 1\{0,1\}^*$, we get
$$[ \overline{w} ]_2=2^{|w|}-[w]_2-1.$$
This gives 
\begin{align*}\sum_{\substack{w \in 1\{0,1\}^*\\ w1\ {\rm
is\ a\ suffix\ of}\ (2n+1)_2}} \hspace{-.9cm} s( [ \overline{w} ]_2 )
=\hspace{-.3cm}\sum_{\substack{w \in 1\{0,1\}^*\\ w\ {\rm is\ a\ suffix\ of}\ (n)_2}} \hspace{-.9cm} s(2^{|w|}- [w]_2 -1)=\sum_{m=0}^k s\left(2^{i_m}-\sum_{\ell=0}^{m-1} 2^{i_\ell}-1\right). 
\end{align*}
Applying the previous equality and Lemma~\ref{s2} to \eqref{ss1} gives the
equality 
$$f(2n+1)=f(n)+s(n+1)\qquad (n\geqslant 1).$$ Recall that we have already shown that $f(0)=s(0)$, $f(1)=s(1)$ and that $f(2n)=f(n)$. 

We can now apply induction directly. Suppose that $f(j)=s(j)$ for all
$j$  such that $0\leqslant j<n$ and consider $f(n)$. If $n$ is even,
write $n=2m$ and note that $m<n$. Then
$$f(n)=f(2m)=f(m)=s(m)=s(2m)=s(n).$$ If $n\geqslant 3$ is odd, write
$n=2m+1$ and note that $1\leqslant m<n$. Then
$$f(n)=f(2m+1)=f(m)+s(m+1)=s(m)+s(m+1)=s(2m+1)=s(n).$$ 
Thus $f(n)=s(n)$
for all $n\geqslant 0$ and the theorem is proved.
\end{proof}


\end{document}